\def\newthm#1#2{\newtheorem{#1}[dummy]{#2}%
  \expandafter\def\csname#2\endcsname##1{\hyperref[#1:##1]{#2~\ref*{#1:##1}}}}
\theoremstyle{definition}
\newtheorem*{mainthm}{Theorem}
\newcommand{\Section}[1]{\hyperref[sec:#1]{Section~\ref*{sec:#1}}}
\newcommand{\Table}[1]{\hyperref[tab:#1]{Table~\ref*{tab:#1}}}
\newcommand{\eqn}[1]{\hyperref[eqn:#1]{(\ref*{eqn:#1})}}
\DeclareMathOperator{\QH}{QH}
\DeclareMathOperator{\QK}{QK}
\DeclareMathOperator{\pt}{pt}
\DeclareMathOperator{\codim}{codim}
\DeclareMathOperator{\dist}{dist}
\def\poly{{\mathrm{poly}}}
\DeclareMathOperator{\ch}{\ch}
\newcommand{\ssm}{\smallsetminus}
\newcommand{\Z}{{\mathbb Z}}
\newcommand{\cO}{{\mathcal O}}
\newcommand{\gw}[2]{\langle #1 \rangle^{\mbox{}}_{#2}}
\newcommand{\euler}[1]{\chi_{_{#1}}}
\newcommand{\al}{{\alpha}}
\newcommand{\be}{{\beta}}
\newcommand{\ga}{{\gamma}}
\DeclareMathOperator{\ev}{ev}
\newcommand{\wt}{\widetilde}
\newcommand{\wh}{\widehat}
\newcommand{\wb}{\overline}
\newcommand{\ov}{\overline}
\newcommand{\ignore}[1]{}
\newcommand{\Mb}{\wb{\mathcal M}}
\begin{document}

\title[Euler characteristics in quantum $K$-theory]{Euler
  characteristics in the quantum $K$-theory of flag varieties}

\date{March 6, 2019}

\author[A.~S.~Buch]{Anders~S.~Buch}
\address{Department of Mathematics, Rutgers University, 110
  Frelinghuysen Road, Piscataway, NJ 08854, USA}
\email{asbuch@math.rutgers.edu}

\author[S.~Chung]{Sjuvon~Chung}
\address{Department of Mathematics, The Ohio State University, 100 Math Tower, 231 W 18th Avenue, Columbus, OH 43210, USA}
\email{chung.809@osu.edu}

\author[C.~Li]{Changzheng~Li}
\address{School of Mathematics, Sun Yat-sen University,
  Guangzhou 510275, P.R. China}
\email{lichangzh@mail.sysu.edu.cn}

\author[L.~C.~Mihalcea]{Leonardo~C.~Mihalcea}
\address{460 McBryde Hall, Department of Mathematics, Virginia Tech,
  Blacksburg, VA 24061, USA}
\email{lmihalce@math.vt.edu}

\subjclass[2010]{Primary 14N35; Secondary 19E08, 14N15, 14M15}

\thanks{The authors acknowledge support from NSF grant DMS-1503662
  (Buch), NSFC grants 11771455, 11831017, and Guangdong Introducing Innovative 
  and Enterpreneurial Teams No. 2017ZT07X355 (Li), and the NSA Young
  Investigator Award H98320-16-1-0013 and a Simons Collaboration
  Grant (Mihalcea).}

\begin{abstract}
  We prove that the sheaf Euler characteristic of the product of a
  Schubert class and an opposite Schubert class in the quantum
  $K$-theory ring of a (generalized) flag variety $G/P$ is equal to
  $q^d$, where $d$ is the smallest degree of a rational curve joining
  the two Schubert varieties.  This implies that the sum of the
  structure constants of any product of Schubert classes is equal to
  1.  Along the way, we provide a description of the smallest degree
  $d$ in terms of its projections to flag varieties defined by maximal
  parabolic subgroups.
\end{abstract}

\maketitle

%%%%% INTRODUCTION %%%%%

\section{Introduction}

The goal of this paper is to relate distances between Schubert
varieties in a complex flag variety $X = G/P$ to products of Schubert
classes in the quantum $K$-theory ring $\QK_T(X)$.

The torus-equivariant $K$-theory ring $K_T(X)$ is an algebra over the
ring of virtual representations $\Gamma = K_T(\pt)$ of the maximal
torus in $G$.  As a module over $\Gamma$, the ring $K_T(X)$ has a
basis consisting of the classes $\cO_v = [\cO_{X_v}]$ of the Schubert
varieties $X_v \subset X$, and another basis consisting of the classes
$\cO^u = [\cO_{X^u}]$ of the opposite Schubert varieties $X^u$.

Let $\Gamma\llbracket q \rrbracket$ be the ring of formal power series
in variables $q_\be$ that correspond to the Schubert basis
$\{[X_{s_\be}]\}$ of $H_2(X,\Z)$.  Given any degree
$d = \sum_\be d_\be [X_{s_\be}]$ in $H_2(X,\Z)$ we write
$q^d = \prod_\be q_\be^{d_\be}$.  The (small, equivariant) quantum
$K$-theory ring $\QK_T(X)$ of Givental \cite{givental:wdvv} and Lee
\cite{lee:quantum} is a $\Gamma\llbracket q \rrbracket$-algebra, which
as a module over $\Gamma\llbracket q \rrbracket$ can be defined by
$\QK_T(X) = K_T(X) \otimes_\Gamma \Gamma\llbracket q \rrbracket$.  The
product in $\QK_T(X)$ takes the form
\begin{equation}\label{eqn:qkprod}
  \cO^u \star \cO^v = \sum_{w,d} N^{w,d}_{u,v}\, q^d\, \cO^w \,,
\end{equation}
where the Schubert structure constants $N^{w,d}_{u,v} \in \Gamma$ are
defined in terms of the $K$-theoretic Gromov-Witten invariants of $X$.

The sheaf Euler characteristic map $\euler{X} : K_T(X) \to \Gamma$ is
defined by
\[
  \euler{X}([E]) = \sum_{i \geq 0} (-1)^i\, [H^i(X,E)]
\]
for any equivariant vector bundle $E$.  Equivalently, $\euler{X}$ is
the unique $\Gamma$-linear map satisfying
$\euler{X}(\cO^u) = \euler{X}(\cO_v) = 1$.  Let
$\chi : \QK_T(X) \to \Gamma\llbracket q \rrbracket$ denote the
$\Gamma\llbracket q \rrbracket$-linear extension of $\euler{X}$.  Our
main result is the following theorem.

\begin{mainthm}
  We have $\chi(\cO^u \star \cO_v) = q^{\dist_X(u,v)}$, where
  $\dist_X(u,v) \in H_2(X,\Z)$ denotes the smallest degree of a
  rational curve connecting $X^u$ to $X_v$.
\end{mainthm}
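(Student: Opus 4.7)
Expanding $\cO^u \star \cO_v = \sum_{w, d} M^{w, d}_{u, v}\, q^d\, \cO^w$ in the opposite Schubert basis and using the $\Gq$-linearity of $\chi$ together with $\chi(\cO^w) = 1$, the theorem reduces to showing
\[
  \sum_w M^{w, d}_{u, v} \;=\; \begin{cases} 1 & \text{if } d = d_0, \\ 0 & \text{otherwise,} \end{cases}
\]
where $d_0 := \dist_X(u, v)$. The description of $d_0$ via projections to the maximal-parabolic flag varieties (advertised in the abstract) is what makes this minimum well-defined in the partial order on effective classes.

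For $d$ with $d \not\geq d_0$: by the definition of $d_0$, no degree-$d$ stable map meets both $X^u$ and $X_v$, so the 2-- and 3--point $K$-theoretic Gromov--Witten invariants $\langle \cO^u, \cO_v, \cdot \rangle_d$ all vanish. Propagating this through the Givental--Lee formula, which expresses each $M^{w,d}_{u,v}$ in terms of Gromov--Witten invariants of degree $\leq d$ together with corrections from the inverse of the quantum metric, yields $M^{w,d}_{u,v} = 0$ by induction on $d$.

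At $d = d_0$, after using the previous step to eliminate the lower-degree cross terms that enter the inverse quantum metric, the sum $\sum_w M^{w, d_0}_{u,v}$ should collapse to the sheaf Euler characteristic of the minimal Gromov--Witten variety $\cM_{d_0} := \ev_1^{-1}(X^u) \cap \ev_2^{-1}(X_v) \subset \Mb_{0,2}(X, d_0)$. When $d_0 = 0$ this is the Richardson variety $X^u \cap X_v$, whose structure sheaf has Euler characteristic $1$ by the classical rational-singularities result. For $d_0 > 0$ one needs an analogous rational-singularities / connectedness statement for $\cM_{d_0}$, in the spirit of the authors' earlier work on Gromov--Witten varieties.

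The main obstacle lies in the cancellation for $d > d_0$: here the Gromov--Witten invariants are generically nonzero, and the cancellation must come from the quantum-metric corrections inherent in Givental--Lee's definition. To handle this I would use the maximal-parabolic projection description of $d_0$ to reduce each coordinate of the ``excess'' $d - d_0$ to a problem on a Grassmannian $G/P_\beta$, where 2-- and 3--point $K$-theoretic invariants are combinatorially tractable, and combine this reduction with a projection-formula argument for $\chi$ and $\star$ to force the excess contributions to cancel.
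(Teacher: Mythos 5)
Your reformulation---reduce to showing $\sum_w M^{w,d}_{u,v}$ equals $1$ when $d=\dist_X(u,v)$ and $0$ otherwise---is a correct restatement, and the vanishing for $d\not\geq\dist_X(u,v)$ can be made rigorous. But the crux, which you rightly flag as the main obstacle, is the cancellation $\sum_w M^{w,d}_{u,v}=0$ for $d$ strictly larger than $\dist_X(u,v)$, and the route you propose (pushing along the projections $\pi_\be:X\to Z_\be$) is not viable: the quantum $K$-theory structure constants of $X$ are not controlled by those of the $Z_\be$, and there is no projection formula relating $\chi$ and $\star$ across these maps that would turn this into a Grassmannian computation. More fundamentally, trying to organize the proof around the individual structure constants, or even their degree-by-degree sums, is working at the wrong level; that is what forces you into the Givental--Lee recursion and the intractable cancellations.

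The paper sidesteps the structure constants entirely. From \Proposition{gw2pt} the quantum $K$-metric evaluates as $(\!(\cO^u,\cO_v)\!)=q^{\dist_X(u,v)}/\prod_\be(1-q_\be)$; taking $u=e$ and using $\Gq$-linearity gives the key identity $\chi(\ga)=(\!(\ga,1)\!)\,\prod_\be(1-q_\be)$ for every $\ga\in\QK_T(X)$. The Frobenius property of $\star$ together with $\cO_v\star 1=\cO_v$ then yields
\[
\chi(\cO^u\star\cO_v)=(\!(\cO^u\star\cO_v,1)\!)\,\textstyle\prod_\be(1-q_\be)=(\!(\cO^u,\cO_v\star 1)\!)\,\textstyle\prod_\be(1-q_\be)=(\!(\cO^u,\cO_v)\!)\,\textstyle\prod_\be(1-q_\be)=q^{\dist_X(u,v)}.
\]
The cancellations you were worried about for $d>\dist_X(u,v)$ collapse to the trivial identity $\bigl(\sum_{d\geq 0}q^d\bigr)\prod_\be(1-q_\be)=1$. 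The Frobenius trick---pair with $1$ and move $\cO_v$ across the pairing so that only the \emph{two}-point invariants ever appear---is the missing idea in your proposal.
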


This result was proved earlier in \cite{buch.chung:euler} by Buch and
Chung when $X$ is a \emph{cominuscule flag variety}, such as a
Grassmann variety of Lie type A or a maximal isotropic Grassmannian of
type B, C, or D.

Implicit in the statement is the claim that, given opposite Schubert
varieties $X^u$ and $X_v$ in $X$, there exists a unique minimal degree
$\dist_X(u,v)$ in $H_2(X,\Z)$ for which $X^u$ and $X_v$ can be
connected by a rational curve of this degree.  We will refer to this
degree as the \emph{distance} between $X^u$ and $X_v$.  Results of
Fulton and Woodward \cite{fulton.woodward:quantum} show that this
question is equivalent to the existence of a minimal degree $d$ for
which $q^d$ occurs in the product $[X^u] \star [X_v]$ in the small
quantum cohomology ring $\QH(X)$.  Postnikov has proved in
\cite{postnikov:quantum*1} that this is true when $X = G/B$ is a
variety of complete flags.  We deduce the existence of a minimal
degree in general from these results.  We also show that
$\dist_X(u,v)$ can be expressed in terms of distances in flag
varieties defined by maximal parabolic subgroups.

Let $\Mb_{0,3}(X,d)$ denote the Kontsevich moduli space of stable maps
to $X$ of degree $d$ and genus zero.  Any 2-pointed $K$-theoretic
Gromov-Witten invariant $\gw{\cO^u,\cO_v}{d}$ can be interpreted as
the sheaf Euler characteristic of the Gromov-Witten variety
$\ev_1^{-1}(X^u) \cap \ev_2^{-1}(X_v)$ in $\Mb_{0,3}(X,d)$.  It was
proved in \cite{buch.chaput.ea:finiteness} that this Gromov-Witten
variety is either empty or unirational with rational singularities.
Since it is non-empty if and only if there exists a rational curve of
degree $d$ from $X^u$ to $X_v$, the 2-point Gromov-Witten invariants
of $X$ are determined by the formula
\[
  \gw{\cO^u, \cO_v}{d} = \begin{cases}
    1 & \text{if $d \geq \dist_X(u,v)$;}\\
    0 & \text{otherwise.}
  \end{cases}
\]
We show that this is equivalent to our identity
$\euler{X}(\cO^u \star \cO_v) = q^{\dist_X(u,v)}$ by using the
Frobenius property of the product $\star$ of $\QK_T(X)$.  Still
another equivalent formulation is that the small quantum $K$-metric of
$X$ is given by
\[
  (\!( \cO^u, \cO_v )\!) =
  \frac{q^{\dist_X(u,v)}}{\prod_\be (1 - q_\be)} \,.
\]

According to the definition of the quantum $K$-theory ring $\QK_T(X)$,
the product \eqn{qkprod} of two Schubert classes could potentially
have infinitely many non-zero terms.  Our main result directly implies
that, for all but finitely many degrees $d$, the sum of structure
constants $\sum_w N^{w,d}_{u,v}$ is equal to zero.  In addition, the
sum of these sums over all degrees $d$ is equal to 1:
\[
  \sum_d \sum_w N^{w,d}_{u,v} = 1 \,.
\]

It has recently been established that the quantum $K$-theory ring
$\QK_T(X)$ satisfies \emph{finiteness}, in the sense that only
finitely many of the structure constants $N^{w,d}_{u,v}$ are non-zero.
Equivalently, the quantum $K$-theory ring contains a subring defined
by $\QK_T^\poly(X) = K_T(X) \otimes_\Gamma \Gamma[q]$.  This was
proved by Buch, Chaput, Mihalcea, and Perrin when the Picard group of
$X$ has rank one \cite{buch.mihalcea:quantum,
  buch.chaput.ea:finiteness, buch.chaput.ea:rational}, by Kato
\cite{kato:loop, kato:frobenius} for varieties of complete flags
$G/B$, and by Anderson, Chen, and Tseng
\cite{anderson.chen.ea:quantum} for arbitrary flag varieties $G/P$.
In addition, Kato's work proves a relation between the quantum
$K$-theory ring $\QK_T(G/B)$ and the equivariant $K$-homology of
affine Grassmannians that was conjectured in
\cite{lam.li.ea:conjectural}.

The sheaf Euler characteristic map $\euler{X} : K_T(X) \to \Gamma$ is
almost never a ring homomorphism, for example because
$\euler{X}(\cO^u \cdot \cO_v) = 0 \neq 1 = \euler{X}(\cO^u) \cdot
\euler{X}(\cO_v)$ whenever $X^u$ and $X_v$ are disjoint Schubert
varieties in $X$.  However, our main theorem implies that $\euler{X}$
lifts to a well-defined ring homomorphism
$\wh\chi : \QK_T^\poly(X) \to \Gamma$ defined by
$\wh\chi(\cO^u) = \wh\chi(q_\be) = 1$.  We consider this as evidence
that the quantum $K$-theory ring is a natural construction.

In \Section{dist} we prove the existence of the minimal degree
$\dist_X(u,v)$ based on Fulton, Woodward, and Postnikov's results
about quantum cohomology.  \Section{qk} then defines the quantum
$K$-theory ring $\QK_T(X)$ and proves the identity
$\chi(\cO^u \star \cO_v) = q^{\dist_X(u,v)}$ and its consequences.

We thank David Anderson for helpful conversations.

\section{The distance between Schubert varieties}
\label{sec:dist}

%%%%% SUBSECTION 1.1 - FLAG VARIETIES AND $K$-theory %%%%%
\subsection{Flag varieties}

Let $X = G/P$ be a flag variety defined by a connected semisimple
complex Lie group $G$ and a parabolic subgroup $P$.  Fix a maximal
torus $T$ and a Borel subgroup $B$ such that
$T \subset B \subset P \subset G$.  The opposite Borel subgroup
$B^- \subset G$ is defined by $B \cap B^- = T$.  Let $W = N_G(T)/T$ be
the Weyl group of $G$, $W_P = N_P(T)/T$ the Weyl group of $P$, and let
$W^P \subset W$ be the subset of minimal representatives of the cosets
in $W/W_P$.  Let $\Phi$ denote the root system of $G$, with positive
roots $\Phi^+$ and simple roots $\Delta \subset \Phi^+$.  The
parabolic subgroup $P$ is determined by the subset
$\Delta_P = \{ \be \in \Delta \mid s_\be \in W_P \}$.  Each element
$w \in W$ defines a $B$-stable Schubert variety $X_w = \ov{Bw.P}$ and
a $B^-$-stable (opposite) Schubert variety $X^w = \ov{B^-w.P}$.  If
$w \in W^P$ is a minimal representative, then
$\dim(X_w) = \codim(X^w,X) = \ell(w)$.

The group $H_2(X,\Z)$ is a free $\Z$-module, with a basis consisting
of the Schubert classes $[X_{s_\be}]$ for
$\be \in \Delta\ssm\Delta_P$.  Given two elements
$d = \sum_\be d_\be [X_{s_\be}]$ and
$d' = \sum_\be d'_\be [X_{s_\be}]$ expressed in this basis, we write
$d \leq d'$ if and only if $d_\be \leq d'_\be$ for each
$\be \in \Delta\ssm\Delta_P$.  This defines a partial order on
$H_2(X,\Z)$.

For any root $\al \in \Phi$ that is not in the span $\Phi_P$ of
$\Delta_P$, there exists a unique irreducible $T$-invariant curve
$X(\al) \subset X$ that connects the points $1.P$ and $s_\al.P$.  An
arbitrary irreducible $T$-invariant curve $C \subset X$ has the form
$C = w.X(\al)$ for some $w \in W$ and $\al \in \Phi^+ \ssm \Phi_P$.

\subsection{Quantum cohomology}

Given an effective degree $d \geq 0$ in $H_2(X,\Z)$ we let
$\Mb_{0,n}(X,d)$ denote the Kontsevich moduli space of all $n$-pointed
stable maps $f : C \to X$ of arithmetic genus zero and degree
$f_*[C] = d$.  This space is equipped with evaluation maps
$\ev_i : \Mb_{0,n}(X,d) \to X$ for $1 \leq i \leq n$, where $\ev_i$
sends a stable map to the image of the $i$-th marked point in its
domain.  Given cohomology classes $\ga_1, \dots, \ga_n \in H^*(X,\Z)$,
the corresponding (cohomological) Gromov-Witten invariant of degree
$d$ is defined by
\[
  \gw{\ga_1, \dots, \ga_n}{d} = \int_{\Mb_{0,n}(X,d)} \ev_1^*(\ga_1)
  \wedge \dots \wedge \ev_n^*(\ga_n) \,.
\]

Let $\Z[q] = \Z[q_\be : \be \in \Delta\ssm\Delta_P]$ denote a
polynomial ring in variables $q_\be$ corresponding to the basis
elements of $H_2(X,\Z)$.  Given any degree
$d = \sum_\be d_\be [X_{s_\be}] \in H_2(X,\Z)$, we will write
$q^d = \prod_\be q_\be^{d_\be}$.  The (small) quantum cohomology ring
$\QH(X)$ is a $\Z[q]$-algebra which as a $\Z[q]$-module can be defined
by $\QH(X) = H^*(X,\Z) \otimes_\Z \Z[q]$.  The product is defined by
\[
  \ga_1 \star \ga_2 =
  \sum_{w, d \geq 0} \gw{\ga_1, \ga_2, [X_w]}{d}\, q^d\, [X^w]
\]
for $\ga_1, \ga_2 \in H^*(X,\Z)$.  Here we identify any class
$\ga \in H^*(X,\Z)$ with $\ga \otimes 1 \in \QH(X)$.

In the following, the image of a stable map to $X$ will be called a
stable curve in $X$.  Given a stable curve $C \subset X$, we let $[C]$
denote the degree in $H_2(X;\Z)$ defined by $C$.  In particular, we
set $[C]=0$ if $C$ is a single point.  The following theorem is a
subset of \cite[Thm.~9.1]{fulton.woodward:quantum}.

\begin{thm}[Fulton and Woodward]\label{thm:fw}
  Let $u,v \in W^P$ and $d \in H_2(X,\Z)$.  The Schubert varieties
  $X^u$ and $X_v$ can be connected by a stable curve of degree $d$ if
  and only if $q^{d'}$ occurs in the product $[X^u] \star [X_v]$ for
  some $d' \leq d$.
\end{thm}

Let $Y = G/B$ denote the variety of complete flags.  In this case the
following was proved in \cite[Cor.~3]{postnikov:quantum*1}.

\begin{thm}[Postnikov]\label{thm:postnikov}
  Let $u,v \in W$.  There is a unique minimal degree $d_{\min}(u,v)$
  in $H_2(Y,\Z)$ for which $q^{d_{\min}(u,v)}$ occurs in
  $[Y^u] \star [Y^v]$.
\end{thm}

\subsection{Distance}

In this section we extend Postnikov's result to arbitrary flag
varieties $X = G/P$.  For each simple root $\be \in \Delta$ we set
$Z_\be = G/P_\be$, where $P_\be \subset G$ is the unique maximal
parabolic subgroup containing $B$ for which $s_\be \notin W_{P_\be}$.
The group $H_2(Z_\be,\Z)$ is free of rank one, generated by
$[(Z_\be)_{s_\be}]$.  To simplify notation we identify $H_2(Z_\be,\Z)$
with $\Z$, by identifying $[(Z_\be)_{s_\be}]$ with 1.  Let
$\pi_\be : X \to Z_\be$ denote the projection.  Any degree
$d \in H_2(X,\Z)$ is then given by $d = \sum_\be d_\be [X_{s_\be}]$,
where $d_\be = (\pi_\be)_*(d)$.

Given $u, v \in W$ we let $\dist_\be(u,v) \in \Z$ denote the smallest
degree of a stable curve in $Z_\be$ connecting the Schubert varieties
$(Z_\be)^u$ and $(Z_\be)_v$.  This is well defined since the natural
numbers are well ordered.  Define the \emph{distance} between $X^u$
and $X_v$ to be the class in $H_2(X,\Z)$ given by
\begin{equation}\label{eqn:dist}
  \dist_X(u,v) =
  \sum_{\be \in \Delta\ssm\Delta_P} \dist_\be(u,v)\, [X_{s_\be}] \,.
\end{equation}
This terminology is justified by \Theorem{dist} below.  In the case
where both $X^u$ and $X_v$ are single points, the identity \eqn{dist}
can also be found in \cite{barligea:curve}.

\begin{lemma}\label{lemma:curve_beta}
  Let $u,v \in W$ and $\be \in \Delta\ssm \Delta_P$.  Then $X^u$ and
  $X_v$ can be connected by a stable curve $C \subset X$ for which
  $(\pi_\be)_*[C] = \dist_\be(u,v)$.
\end{lemma}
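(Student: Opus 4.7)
The plan is to construct $C$ by lifting a minimal-degree stable curve from $Z_\be$ to $X$ along the projection $\pi_\be$, and then gluing the lifts with rational chains inside the fibers of $\pi_\be$ to land in the correct Schubert varieties.

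To begin, by the definition of $\dist_\be(u,v)$ and the well-ordering of $\Z_{\geq 0}$, there is a stable curve $\tilde C \subset Z_\be$ of degree $\dist_\be(u,v)$ meeting $(Z_\be)^u$ at some point $\tilde p$ and $(Z_\be)_v$ at some point $\tilde q$. Decompose $\tilde C = \tilde C_1 \cup \cdots \cup \tilde C_k$ as its union of irreducible rational components, joined through nodes. The projection $\pi_\be \colon X \to Z_\be$ is a smooth projective morphism whose fiber is the flag variety $P_\be/P$; since this fiber is rational (hence rationally connected), for each $\tilde C_i \cong \P^1$ the pullback family $\tilde C_i \times_{Z_\be} X \to \tilde C_i$ admits a section by the Graber--Harris--Starr theorem. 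This section yields a rational curve $C_i \subset X$ mapping isomorphically onto $\tilde C_i$ under $\pi_\be$, so in particular $(\pi_\be)_*[C_i] = [\tilde C_i]$.

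Next I patch the lifts $C_1,\dots,C_k$ into a connected stable curve meeting $X^u$ and $X_v$. Over each node $\tilde r$ of $\tilde C$, the two adjacent lifts end at possibly distinct points of the fiber $\pi_\be^{-1}(\tilde r) \cong P_\be/P$, which I join by a chain of rational curves inside that fiber, using that flag varieties are rationally chain connected. Since $\pi_\be(X^u) = (Z_\be)^u$ and $\pi_\be(X_v) = (Z_\be)_v$, the fibers over $\tilde p$ and $\tilde q$ contain points of $X^u$ and $X_v$ respectively; I attach a rational chain in each of these fibers joining the outermost horizontal lift to such a Schubert point. Because every vertical piece is contracted by $\pi_\be$, the resulting connected stable curve $C \subset X$ satisfies
\[
  (\pi_\be)_*[C] = \sum_{i=1}^k [\tilde C_i] = [\tilde C] = \dist_\be(u,v),
\]
completing the construction.

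The main obstacle is the horizontal lifting: one must ensure that each irreducible rational component of $\tilde C$ admits a section lift in $X$ so that no projection degree is lost or inflated. Graber--Harris--Starr provides this cleanly; as an alternative, one could choose $\tilde C$ to be $T$-invariant with components of the form $w \cdot X(\al)$ and lift them to their analogues in $X$, but extra care would then be needed to rule out multi-covers. Once the horizontal lifts are in hand, the vertical patching is routine given rational chain connectedness of $P_\be/P$.
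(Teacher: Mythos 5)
Your proof is correct, but it takes a genuinely different route from the paper. The paper's proof begins by observing that the Gromov--Witten variety $\ev_1^{-1}((Z_\be)^u) \cap \ev_2^{-1}((Z_\be)_v)$ in $\Mb_{0,3}(Z_\be,\dist_\be(u,v))$ is a closed $T$-invariant subvariety, hence contains a $T$-fixed point by the Borel fixed point theorem; the corresponding $T$-invariant stable curve $\wt C \subset Z_\be$ is automatically a chain of irreducible $T$-invariant curves $\kappa_i.Z_\be(\al_i)$, each of which lifts \emph{explicitly} to the $T$-invariant curve $\kappa_i.X(\al_i) \subset X$ with the right projection degree. The endpoints are joined to $X^u$ and $X_v$ via the Bruhat order: $\kappa_0.P_\be \in (Z_\be)^u$ gives $\kappa_0 w' \geq u$ for some $w' \in W_{P_\be}$, so $\kappa_0 w'.P \in X^u$ lies in the same $\pi_\be$-fiber as $\kappa_0.P$. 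Your argument replaces this combinatorial machinery with the Graber--Harris--Starr theorem to produce sections of $\pi_\be$ over each component of a not-necessarily-$T$-invariant curve, and replaces the Bruhat-order step with the surjectivity $\pi_\be(X^u) = (Z_\be)^u$. What you gain is conceptual uniformity (GHS handles all lifts at once and needs no structure theory of $T$-invariant curves); what you lose is elementariness (GHS is a deep theorem where the paper's lifts are one-line identifications $\kappa_i.X(\al_i) \mapsto \kappa_i.Z_\be(\al_i)$). One small bookkeeping issue: you decompose the \emph{image} $\tilde C$ into irreducible components ``joined through nodes,'' but the image of a stable map may have worse-than-nodal singularities and overlapping branches. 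It is cleaner to run the argument on the components $C'_i \cong \P^1$ of the \emph{domain} of the stable map, apply GHS to the pullback $C'_i \times_{Z_\be} X \to C'_i$, and patch at the nodes of the domain (over which adjacent sections land in the same fiber of $\pi_\be$); this also correctly handles multiple covers, since then $\sum_i (\pi_\be)_*(\sigma_i)_*[C'_i] = \sum_i (\phi_i)_*[C'_i] = \dist_\be(u,v)$ on the nose. Choosing $\tilde C$ to be $T$-invariant, as the paper does, sidesteps all of this automatically.
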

\begin{proof}
  Since the intersection
  $\ev_1^{-1}((Z_\be)^u) \cap \ev_2^{-1}((Z_\be)_v)$ is a closed
  subvariety of $\Mb_{0,3}(Z_\be,\dist_\be(u,v))$ that is invariant
  under the action of $T$, we may find a $T$-invariant stable curve
  $\wt C \subset Z_\be$ of degree $\dist_\be(u,v)$ that connects
  $(Z_\be)^u$ and $(Z_\be)_v$.  This implies that $\wt C$ is a chain
  of irreducible $T$-invariant curves, that is, there exist
  $\kappa_0, \kappa_1, \dots, \kappa_m \in W$ and
  $\al_1,\dots,\al_m \in \Phi^+\ssm \Phi_P$ such that
  $\kappa_0.P_\be \in (Z_\be)^u$, $\kappa_m.P_\be \in (Z_\be)_v$,
  $\kappa_i = \kappa_{i-1} s_{\al_i}$ for $1 \leq i \leq m$, and
  $\wt C = \kappa_1.Z_\be(\al_1) \cup \dots \cup
  \kappa_m.Z_\be(\al_m)$.  Since we have $\kappa_0 w' \geq u$ for some
  $w' \in W_{P_\be}$, there exists a stable curve $C' \subset X$
  connecting $\kappa_0.P$ to a point in $X^u$, such that
  $(\pi_\be)_*[C'] = 0$.  Similarly we can find $C'' \subset X$
  connecting $\kappa_m.P$ to a point in $X_v$ such that
  $(\pi_\be)_*[C''] = 0$.  We can therefore take $C \subset X$ to be
  the union of $C'$, $C''$, and the curves $\kappa_i.X(\al_i)$ for
  $1 \leq i \leq m$.
\end{proof}

Recall that $[Y_v] = [Y^{w_0 v}]$, where $w_0$ is the longest element
of $W$.  It therefore follows from \Theorem{postnikov} that
$q^{d_{\min}(u,w_0 v)}$ is the unique minimal power of $q$ that occurs
in the product $[Y^u] \star [Y_v] \in \QH(Y)$.

\begin{lemma}\label{lemma:dist=min}
  We have $\dist_Y(u,v) = d_{\min}(u,w_0 v)$.
\end{lemma}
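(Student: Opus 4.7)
The plan is to verify the identity $\dist_Y(u,v) = d_{\min}(u, w_0 v)$ coefficient by coefficient in the basis $\{[Y_{s_\beta}]\}_{\beta \in \Delta}$. As a preparatory step, I would combine \Theorem{postnikov} with \Theorem{fw}, together with the identity $[Y_v] = [Y^{w_0 v}]$, to identify $d_{\min}(u, w_0 v)$ geometrically: it is the unique minimum, under the componentwise partial order $\leq$, of the set $D \subset H_2(Y,\Z)$ of effective degrees $d$ for which $Y^u$ and $Y_v$ are connected by a stable curve of degree $d$. In other words, $D$ is precisely the upward closure of $\{d_{\min}(u, w_0 v)\}$.

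For the inequality $(d_{\min}(u, w_0 v))_\beta \geq \dist_\beta(u,v)$, I would take any stable curve $C \subset Y$ of degree $d_{\min}(u, w_0 v)$ joining $Y^u$ to $Y_v$ and push it forward along the projection $\pi_\beta \colon Y \to Z_\beta$. The image is a stable curve in $Z_\beta$ connecting $(Z_\beta)^u$ and $(Z_\beta)_v$ of degree $(\pi_\beta)_* d_{\min}(u, w_0 v) = (d_{\min}(u, w_0 v))_\beta$, which must therefore be at least $\dist_\beta(u,v)$ by the definition of the latter.

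For the opposite inequality $(d_{\min}(u, w_0 v))_\beta \leq \dist_\beta(u,v)$, I would invoke \Lemma{curve_beta}, applied with $X = Y$ (so $\Delta \ssm \Delta_B = \Delta$), to produce a stable curve $C_\beta \subset Y$ connecting $Y^u$ to $Y_v$ with $(\pi_\beta)_*[C_\beta] = \dist_\beta(u,v)$. Since $C_\beta$ joins $Y^u$ and $Y_v$ we have $[C_\beta] \in D$, hence $[C_\beta] \geq d_{\min}(u, w_0 v)$ in the partial order; reading off the $\beta$-coefficient yields $\dist_\beta(u,v) \geq (d_{\min}(u, w_0 v))_\beta$. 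Combining both inequalities for every $\beta \in \Delta$ gives the desired equality.

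The conceptually delicate point, rather than a technical one, is that each witness curve $C_\beta$ only controls its own $\beta$-component and may be much larger in the other directions; what rescues the argument is that $d_{\min}(u, w_0 v)$ is characterized as the \emph{unique} minimum of $D$ under the partial order $\leq$, which is strong enough to pin down each coefficient in isolation. Thus, beyond the previously established results of Fulton--Woodward and Postnikov, the only real ingredient required is \Lemma{curve_beta}, whose construction of a chain of $T$-invariant curves with prescribed $\beta$-degree does all the geometric work.
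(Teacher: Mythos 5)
Your proof is correct and follows essentially the same route as the paper's: project a minimal-degree curve to $Z_\beta$ for one inequality, and use \Lemma{curve_beta} together with Fulton--Woodward and the minimality statement from Postnikov for the other. The only cosmetic difference is that you package Fulton--Woodward plus Postnikov into the observation that the set $D$ of connecting degrees is the upward closure of $\{d_{\min}(u,w_0v)\}$, whereas the paper invokes the two theorems directly at each step; also note that $\pi_\beta(C)$ has degree \emph{at most} $(\pi_\beta)_*[C]$ rather than exactly equal, but this does not affect the argument.
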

\begin{proof}
  Write
  $d = d_{\min}(u,w_0v) = \sum_{\be \in \Delta} d_\be [Y_{s_\be}]$ and
  let $\be \in \Delta$ be given.  Since $q^d$ occurs in the quantum
  product $[Y^u] \star [Y_v]$, there exists a stable curve
  $C \subset Y$ of degree $d$ from $Y^u$ to $Y_v$ by \Theorem{fw}.
  Since $\pi_\be(C) \subset Z_\be$ is a curve of degree at most
  $d_\be$ from $(Z_\be)^u$ to $(Z_\be)_v$, we have
  $d_\be \geq \dist_\be(u,v)$.  On the other hand, according to
  \Lemma{curve_beta} we can find a stable curve $C \subset Y$ from
  $Y^u$ to $Y_v$ such that $(\pi_\be)_*[C] = \dist_\be(u,v)$.
  \Theorem{fw} then implies that the product $[Y^u] \star [Y_v]$
  contains a power $q^{d'}$ for which $d' \leq [C]$, and the
  minimality of $d_{\min}(u,w_0 v)$ implies that
  $d_{\min}(u,w_0 v) \leq d'$.  We deduce that
  $d_\be \leq \dist_\be(u,v)$, which completes the proof.
\end{proof}

\begin{thm}\label{thm:dist}
  Let $u, v \in W$ and $d \in H_2(X,\Z)$.  There exists a stable curve
  of degree $d$ from $X^u$ to $X_v$ if and only if
  $d \geq \dist_X(u,v)$.
\end{thm}
\begin{proof}
  Write $d = \sum_\be d_\be [X_{s_\be}]$.  The implication `only if'
  follows because $(\pi_\be)_*(d) = d_\be$, as in the proof of
  \Lemma{dist=min}.  Assume that $d \geq \dist_X(u,v)$ and define
  $d' = \sum_{\be \in \Delta} d'_\be [Y_{s_\be}] \in H_2(Y,\Z)$ by
  $d'_\be = d_\be$ for $\be \in \Delta\ssm\Delta_P$ and
  $d'_\be = \dist_\be(u,v)$ for $\be \in \Delta_P$.  Since
  $d' \geq \dist_Y(u,v)$, it follows from \Theorem{fw} that there
  exists a stable curve $C \subset Y$ of degree $d'$ from $Y^u$ to
  $Y_v$.  The image of $C$ under the projection $Y \to X$ is a curve
  from $X^u$ to $X_v$ of degree at most $d$.  By possibly attaching
  some extra components, we obtain the desired stable curve of degree
  $d$.
\end{proof}

%%%%% SECTION 3 - QUANTUM $K$-theory OF G/P %%%%%
\section{Quantum $K$-theory}
\label{sec:qk}

\subsection{$K$-theory}

The equivariant $K$-theory ring $K_T(X)$ is the Grothendieck ring of
$T$-equi\-vari\-ant algebraic vector bundles on $X$, equipped with the
product coming from the tensor product of vector bundles.  This ring
is an algebra over the ring $\Gamma = K_T(\pt)$ of virtual
representations of $T$, with a basis consisting of the Schubert
classes $\cO_v = [\cO_{X_v}]$ for $v \in W^P$; the opposite Schubert
classes $\cO^u = [\cO_{X^u}]$ for $u \in W^P$ form another basis.  The
ring $\Gamma$ can be identified with a ring of Laurent polynomials in
as many variables as the rank of $T$.

The sheaf Euler characteristic map $\euler{X} : K_T(X) \to \Gamma$ is
defined as the pushforward map along the structure morphism
$X \to \{\pt\}$; that is,
\[
  \euler{X}([E]) = \sum_{i \geq 0} (-1)^i\, [H^i(X,E)] \,,
\]
where the sheaf cohomology group $H^i(X,E)$ is regarded as a
representation of $T$.  Equivalently, $\euler{X}$ is the unique
$\Gamma$-linear map defined by
$\euler{X}(\cO^w) = \euler{X}(\cO_w) = 1$
\cite{ramanan.ramanathan:projective,ramanathan:schubert}.  More
generally, if $Z \subset X$ is any closed $T$-invariant subvariety
that is unirational and has rational singularities, then
$\euler{X}([\cO_Z]) = 1$ \cite[Cor.~4.18]{debarre:higher-dimensional}.

\subsection{Gromov-Witten invariants}

Given classes $\ga_1, \dots, \ga_n \in K_T(X)$ and a degree
$d \in H_2(X,\Z)$, the corresponding (equivariant, $K$-theoretic,
$n$-pointed, genus zero) Gromov-Witten invariant of $X$ is defined by
\[
  \gw{\ga_1,\dots,\ga_n}{d} = \euler{\Mb_{0,n}(X,d)}( \ev_1^*(\ga_1)
  \cdot \ldots \cdot \ev_n^*(\ga_n)) \ \in \Gamma \,.
\]
Since the moduli space $\Mb_{0,n}(X,d)$ is empty for $d=0$ and
$n \leq 2$, we will use the convention that
$\gw{\ga_1,\dots,\ga_n}{0} = \euler{X}(\ga_1 \cdot \ldots \cdot
\ga_n)$ for any $n \geq 0$.  This is consistent with the above
definition since the general fibers of the forgetful map
$\Mb_{0,n+1}(X,d) \to \Mb_{0,n}(X,d)$ are projective lines and all
evaluation maps on $\Mb_{0,n}(x,0)$ are identical (see
\cite[Thm.~7.1]{kollar:higher} or
\cite[Thm.~3.1]{buch.mihalcea:quantum}).

The two-point Gromov-Witten invariant $\gw{\cO^u, \cO_v}{d}$ is equal
to the sheaf Euler characteristic of the Gromov-Witten variety
$\ev_1^{-1}(X^u) \cap \ev_2^{-1}(X_v)$, which by
\cite[Cor.~3.3]{buch.chaput.ea:finiteness} is either empty or
unirational with rational singularities.  Since the question of
non-emptiness is determined by \Theorem{dist}, we obtain the following
identity, interpreting the formula for the 2-point K-theoretic Gromov-Witten 
invariants from \cite[Remark 7.5]{buch.mihalcea:nbhds} in terms of the distance function.

\begin{prop}\label{prop:gw2pt}
  For $u,v \in W^P$ and $d \in H_2(X,\Z)$ we have
  \[
    \gw{\cO^u, \cO_v}{d} = \begin{cases}
      1 & \text{if $d \geq \dist_X(u,v)$;}\\
      0 & \text{otherwise.}
    \end{cases}
  \]
\end{prop}

\Proposition{gw2pt} for $d=0$ recovers the well known fact that
$\euler{X}(\cO^u \cdot \cO_v) = 1$ whenever $u \leq v$ in the Bruhat
order on $W^P$, and $\euler{X}(\cO^u \cdot \cO_v) = 0$ otherwise.  In
particular, the bilinear map $K_T(X) \times K_T(X) \to \Gamma$ defined
by $(\ga_1,\ga_2) \mapsto \euler{X}(\ga_1 \cdot \ga_2)$ is a perfect
pairing.

\subsection{Quantum $K$-theory}

The (small, equivariant) quantum $K$-theory ring $\QK_T(X)$ of
Givental \cite{givental:wdvv} and Lee \cite{lee:quantum} is an algebra
over the ring of formal power series
$\Gamma\llbracket q\rrbracket = \Gamma\llbracket q_\be : \be \in
\Delta\ssm\Delta_P\rrbracket$.  As a module over
$\Gamma\llbracket q\rrbracket$ it is defined by
$\QK_T(X) = K_T(X) \otimes_\Gamma \Gamma\llbracket q\rrbracket$.  The
\emph{quantum $K$-metric} is the
$\Gamma\llbracket q\rrbracket$-bilinear pairing
$\QK_T(X) \times \QK_T(X) \to \Gamma\llbracket q \rrbracket$
determined by
\[
  (\!( \ga_1, \ga_2 )\!) \ = \ \sum_{d \geq 0} q^d \gw{\ga_1, \ga_2}{d}
\]
for $\ga_1, \ga_2 \in K_T(X)$.  The \emph{quantum product} $\star$ is
the unique $\Gamma\llbracket q \rrbracket$-bilinear product
$\QK_T(X) \times \QK_T(X) \to \QK_T(X)$ determined by
\[
  (\!( \ga_1 \star \ga_2, \ga_3 )\!) \ = \
  \sum_{d \geq 0} q^d \gw{\ga_1, \ga_2, \ga_3}{d}
\]
for all $\ga_1, \ga_2, \ga_3 \in K_T(X)$.  It was proved by Givental
\cite{givental:wdvv} that this product $\star$ is associative.  The
symmetry of Gromov-Witten invariants implies that the Frobenius
property
$(\!( \ga_1 \star \ga_2, \ga_3 )\!) = (\!( \ga_1, \ga_2 \star \ga_3
)\!)$ holds for all $\ga_1, \ga_2, \ga_3 \in \QK_T(X)$.  The string
identity $\gw{\ga_1,\dots,\ga_n,1}{d} = \gw{\ga_1,\dots,\ga_n}{d}$
implies that $1 \in K_T(X)$ is a multiplicative unit in $\QK_T(X)$.

\begin{remark}
  Consider the formal linear combination
  $t = \sum_{u \in W^P} t_u \cO^u$, where the coefficients $t_u$ are
  independent commuting variables.  The \emph{quantum $K$-potential}
  of $X$ is the generating function
  \[
    G(t,q) = \sum_{n \geq 0} \sum_{d \geq 0} \frac{q^d}{n!}
    \gw{t,\dots,t}{d,n} \,,
  \]
  where $\gw{t,\dots,t}{d,n} = \gw{t,\dots,t}{d}$ denotes an
  $n$-pointed Gromov-Witten invariant with $t$ repeated $n$ times.
  The quantum $K$-metric can be obtained from $G(t,q)$ as
  \[
    (\!( \cO^u, \cO^v )\!) \ = \ \frac{\partial}{\partial t_u}
    \frac{\partial}{\partial t_v} G(t,u) \bigg\vert_{t=0} \,,
  \]
  and the quantum product $\star$ is determined by
  \[
    (\!( \cO^u \star \cO^v, \cO^w )\!) \ = \
    \frac{\partial}{\partial t_u} \frac{\partial}{\partial t_v}
    \frac{\partial}{\partial t_w} G(t,u) \bigg\vert_{t=0} \,.
  \]
  If we do not specialize the variables $t_u$ to zero, then we arrive
  at the big quantum $K$-theory ring of $X$.
\end{remark}

\subsection{Sums of structure constants}

Let $\chi : \QK_T(X) \to \Gamma\llbracket q \rrbracket$ denote the
$\Gamma\llbracket q\rrbracket$-linear extension of the sheaf Euler
characteristic map $\euler{X}$.  The following identity is our main
result.  It was known in the special case where $X$ is a cominuscule
flag variety \cite{buch.chung:euler}.

\begin{thm}\label{thm:euler=dist}
  For $u,v \in W^P$ we have
  $\chi(\cO^u \star \cO_v) = q^{\dist_X(u,v)}$.
\end{thm}
\begin{proof}
  It follows from \Proposition{gw2pt} that
  \[
    (\!( \cO^u, \cO_v )\!) =
    \frac{q^{\dist_X(u,v)}}{\prod_\be (1 - q_\be)} \,,
  \]
  where the product is over $\be \in \Delta\ssm\Delta_P$.  This
  identity implies that
  \[
    \chi(\ga) = (\!( \ga, 1 )\!)\, {\textstyle\prod_\be (1 - q_\be)}
  \]
  for any class $\ga \in \QK_T(X)$.  We obtain
  \[\begin{split}
      \chi(\cO^u \star \cO_v) \
      &= \ (\!( \cO^u \star \cO_v, 1 )\!)\, {\textstyle\prod_\be (1 - q_\be)}
      \ = \ (\!( \cO^u, \cO_v \star 1 )\!)\, {\textstyle\prod_\be (1 - q_\be)} \\
      &= \ (\!( \cO^u, \cO_v )\!)\, {\textstyle\prod_\be (1 - q_\be)}
      \ = \ q^{\dist_X(u,v)} \,,
    \end{split}\]
  as required.
\end{proof}

The Schubert structure constants of the quantum $K$-theory ring
$\QK_T(X)$ are the classes $N^{w,d}_{u,v} \in \Gamma$, indexed by
$u,v,w \in W^P$ and $d \in H_2(X,\Z)$, defined by
\[
  \cO^u \star \cO^v = \sum_{w, d \geq 0} N^{w,d}_{u,v}\, q^d\, \cO^w \,.
\]

\begin{cor}\label{cor:sumcoef}
  Let $u, v \in W^P$.  For all but finitely many degrees
  $d \in H_2(X,\Z)$, the sum $\sum_{w \in W^P} N^{w,d}_{u,v}$ is equal
  to zero.  Moreover, we have
  \[
    \sum_{d \in H_2(X,\Z)} \sum_{w \in W^P} N^{w,d}_{u,v} \ = \ 1 \,.
  \]
\end{cor}
\begin{proof}
  Write $\cO^v = \sum_{z \in W^P} f_z\, \cO_z$ where $f_z \in \Gamma$.
  We then have
  \[
    \sum_{w, d \geq 0} N^{w,d}_{u,v}\, q^d =
    \chi(\cO^u \star \cO^v) =
    \sum_z f_z\, \chi(\cO^u \star \cO_z) =
    \sum_z f_z\, q^{\dist_X(u,z)} \,.
  \]
  The first claim follows because the last sum has finitely many
  terms, and the second claim holds because
  $\sum_z f_z = \euler{X}(\cO^v) = 1$.
\end{proof}

\subsection{Ring homomorphism}

Let $\Gamma[q] \subset \Gamma\llbracket q \rrbracket$ be the subring
of polynomials in the variables $q_\be$, and set
$\QK_T^\poly(X) = K_T(X) \otimes_\Gamma \Gamma[q]$.  It has recently
been proved that the quantum $K$-theory ring $\QK_T(X)$ satisfies
\emph{finiteness}, that is, this ring contains $\QK_T^\poly(X)$ as a
subring \cite{buch.mihalcea:quantum, buch.chaput.ea:finiteness,
  buch.chaput.ea:rational, kato:loop, kato:frobenius,
  anderson.chen.ea:quantum}.  While the sheaf Euler characteristic map
$\euler{X} : K_T(X) \to \Gamma$ is a ring homomorphism only when $X$
is a single point, our last result shows that this changes if $K_T(X)$
is replaced by $\QK_T^\poly(X)$.

\begin{cor}\label{cor:ringhom}
  There is a well-defined ring homomorphism
  $\wh\chi : \QK_T^\poly(X) \to \Gamma$ defined by
  $\wh\chi(\cO^u) = \chi(q_\be) = 1$ for all $w \in W^P$ and
  $\be \in \Delta\ssm\Delta_P$.
\end{cor}
\begin{proof}
  We may consider $\Gamma$ as an algebra over $\Gamma[q]$ through the
  ring homomorphism $\mu : \Gamma[q] \to \Gamma$ defined by
  $\mu(q_\be)=1$ for $\be \in \Delta\ssm\Delta_P$ and $\mu(a)=a$ for
  $a \in \Gamma$.  We can define $\wh\chi$ as a homomorphism of
  $\Gamma[q]$-modules by setting $\wh\chi = \mu \chi$.  Since both of
  the sets $\{\cO^u \mid u \in W^P\}$ and $\{\cO_v \mid v \in W^P\}$
  are bases for $\QK_T^\poly(X)$ over $\Gamma[q]$, it follows from the
  identity
  $\wh\chi(\cO^u \star \cO_v) = 1 = \wh\chi(\cO^u) \cdot
  \wh\chi(\cO_v)$ that $\wh\chi$ is a $\Gamma[q]$-algebra
  homomorphism, as required.
\end{proof}

It would be interesting to know if \Corollary{ringhom} can be
generalized beyond the setting of flag varieties.

%\bibliography{/home/asbuch/TeX/BibTeX/database,genqkeuler}
\bibliographystyle{amsplain}

\providecommand{\bysame}{\leavevmode\hbox to3em{\hrulefill}\thinspace}
\providecommand{\MR}{\relax\ifhmode\unskip\space\fi MR }
% \MRhref is called by the amsart/book/proc definition of \MR.
\providecommand{\MRhref}[2]{%
  \href{http://www.ams.org/mathscinet-getitem?mr=#1}{#2}
}
\providecommand{\href}[2]{#2}

\end{document}